\theoremstyle{plain}
\newtheorem{theo}{Theorem}[section]
\newtheorem{lem}[theo]{Lemma}
\theoremstyle{definition}
\theoremstyle{plain}
\newtheorem{lemma}[theo]{Lemma}
\newtheorem{theorem}[theo]{Theorem}
\theoremstyle{definition}
\newtheorem{remark}[theo]{Remark}
\newcommand{\beq}{\begin{equation}}
\newcommand{\eeq}{\end{equation}}
\newcommand{\bC}{\mathbb{C}}
\newcommand{\gc}{\mathfrak{c}}
\newcommand{\gf}{\mathfrak{f}}
\renewcommand{\gg}{\mathfrak{g}}
\newcommand{\gh}{\mathfrak{h}}
\newcommand{\gk}{\mathfrak{k}}
\newcommand{\gm}{\mathfrak{m}}
\newcommand{\gp}{\mathfrak{p}}
\newcommand{\gq}{\mathfrak{q}}
\newcommand{\gs}{\mathfrak{s}}
\newcommand{\gt}{\mathfrak{t}}
\newcommand{\gu}{\mathfrak{u}}
\newcommand{\gz}{\mathfrak{z}}
\newcommand{\so}{\mathfrak{so}}
\newcommand{\su}{\mathfrak{su}}
\newcommand\SO{\mathrm{SO}}
\newcommand\SU{\mathrm{SU}}
\newcommand\U{\mathrm{U}}
\renewcommand\sp{\mathfrak{sp}}
\renewcommand{\square}{\kern1pt\vbox
{\hrule height 0.6pt\hbox{\vrule width 0.6pt\hskip 3pt
\vbox{\vskip 6pt}\hskip 3pt\vrule width 0.6pt}\hrule height0.6pt}\kern1pt}
\DeclareMathOperator\End{End\;}
\DeclareMathOperator\Ad{Ad}
\DeclareMathOperator\ad{ad}
\newcommand{\be}{\begin{equation}}
\newcommand{\ee}{\end{equation}}
\def\<#1,#2>{\langle\,#1,\,#2\,\rangle}
\newcommand{\arr}{\begin{array}{rlll}}
\newcommand{\ea}{\end{array}}
\newcommand{\bea}{\begin{eqnarray}}
\newcommand{\eea}{\end{eqnarray}}
\newcommand{\bean}{\begin{eqnarray*}}
\newcommand{\eean}{\end{eqnarray*}}
\def\sideremark#1{\ifvmode\leavevmode\fi\vadjust{
\vbox to0pt{\hbox to 0pt{\hskip\hsize\hskip1em
\vbox{\hsize3cm\tiny\raggedright\pretolerance10000
\noindent #1\hfill}\hss}\vbox to8pt{\vfil}\vss}}}
\newcounter{ssig}
\newcounter{ttig}
\newtheorem*{notn*}{Notation}
\title[The index of symmetry of a flag manifold] {The index of symmetry of a flag manifold}
\author{Fabio Podest\`a}
\subjclass[2010]{53C30, 53C35}
\keywords{Homogeneous manifolds, flag manifolds, symmetric spaces.}
\begin{document}
\begin{abstract} {We study the  index of symmetry of a compact generalized flag manifold $M=G/H$ endowed with an invariant K\"ahler structure. When the group $G$ is simple we  show that the leaves of  symmetry  are irreducible Hermitian symmetric spaces and we estimate their dimension.}\end{abstract}

\maketitle
\section{Introduction}
Recently Olmos, Reggiani and Tamaru (\cite{ORT}) introduced the notion of index of symmetry for a Riemannian manifold. In particular given a Riemannian manifold $(M,g)$ on which the full group of isometries $G$ acts transitively, one can define the {\it distribution of symmetry} $\mathcal D$ which at each point $x\in M$ is given by the values at $x$ of all Killing vector fields $X$ with $\nabla X|_x=0$, where $\nabla$ denotes the Levi Civita connection of $g$. The main motivation for introducing $\mathcal D$ is the fact that the {\it index of symmetry} $\imath_s(M):= \dim \mathcal D$ coincides with $\dim M$ precisely when $(M,g)$ is a symmetric space. Therefore the coindex $\dim M - \imath_s(M)$ can be viewed as a sort of "distance" of the manifold from being a Riemannian symmetric space. It is proved that the distribution $\mathcal D$ is actually integrable and that the maximal integral submanifolds, which we will call the {\it leaves of symmetry}, are Riemannian symmetric spaces which are embedded into $M$ as totally geodesic submanifolds.\par
In this work we will focus on compact generalized flag manifolds, namely compact homogeneous spaces $G/H$ where $G$ is a connected compact semisimple Lie group and $H$ is the centralizer in $G$ of a torus. It is well known that these manifolds exhaust all compact K\"ahler manifolds which admit a transitive semisimple compact Lie group of biholomorphic isometries. If $M=G/H$ is a such a manifold endowed with an invariant K\"ahler structure given by an invariant complex structure $J$ and a K\"ahler metric g compatible with $J$, then $(M,g)$ is de Rham irreducible if and only if $G$ is simple and in this case $G$ coincides (up to covering) with the full group of isometries of $(M,g)$ up to a few exceptions which can be listed (see \S 2). Our main result can be stated as follows.

\begin{theorem}\label{main} Let $M=G/H$ be a compact generalized flag manifold endowed with a non-symmetric $G$-invariant K\"ahler structure $(g,J)$. Suppose that $G$ is simple and that it coincides with the full group of isometries of $g$. Then
\begin{itemize}
\item[i)] the leaves of symmetry are complex , totally geodesic submanifolds which  are irreducible Hermitian symmetric spaces.
\item[ii)] There exists a compact Lie subgroup $H'\supset H$ such that the fibers of the fibration $G/H\to G/H'$ coincide with the leaves of symmetry. The subgroup $H'$ depends only on the complex structure $J$ and not on the metric $g$.
\item[iii)] If $k$ denotes the co-index of symmetry (i.e. $k=\dim G/H'$), then
\begin{equation}\label{est}\dim G \leq \frac 12\ k(k-1) \quad \mathrm{and}\quad k\geq 6\end{equation}
with $k=6$ precisely when $\gg=\su(4)$ and $\gh = 2\mathbb R\oplus \su(2)$.
\end{itemize}
\end{theorem}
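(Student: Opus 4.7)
My plan is to translate the condition $\n X^*|_o = 0$ into root-theoretic data and then exploit the rigidity of compact simple $G$. Choose a maximal torus $T \subset G$ with $Z(\gh) \subset \gt$ and decompose $\gm^{\bC} = \bop_{\alpha \in R_M} \gg_\alpha$, where $R_M$ consists of the roots not vanishing on $Z(\gh)$. The complex structure $J$ is encoded by a subset $R_M^+ \subset R_M$ with $R_M = R_M^+ \sqcup (-R_M^+)$, and the K\"ahler metric $g$ by an element $\xi \in Z(\gh)$ with $g|_{\gm_\alpha} = \alpha(\xi) B|_{\gm_\alpha}$, $\alpha(\xi) > 0$ on $R_M^+$, where $\gm_\alpha = (\gg_\alpha \oplus \gg_{-\alpha}) \cap \gg$. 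For $X \in \gg$ I would expand it into root components and use the Koszul formula to compute $(\n_Y X^*)|_o$; vanishing for every $Y \in \gm$ becomes a linear system on the $X_\alpha$ whose coefficients are rational in the values $\alpha(\xi)$. Since $\xi$ ranges over an open cone, the system forces the underlying structure constants to vanish, and one ends up with a purely combinatorial characterization $\mathcal{D}_o = \bop_{\alpha \in \Sigma} \gm_\alpha$ for a subset $\Sigma \subset R_M^+$ depending only on $R$ and $R_M^+$; in particular $\mathcal{D}_o$ is $J$-invariant.

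From this description items (i) and (ii) follow formally. Set $\gh' := \gh \oplus \bop_{\alpha \in \Sigma} \gm_\alpha$: the combinatorial conditions defining $\Sigma$ yield $[\gh, \mathcal{D}_o] \subset \mathcal{D}_o$ and $[\mathcal{D}_o, \mathcal{D}_o] \subset \gh$, so $(\gh', \gh)$ is a compact symmetric pair, $H'$ is compact with $H \subset H'$, and the fibres of $G/H \to G/H'$ are precisely the leaves of $\mathcal{D}$. Each fibre $H'/H$ inherits a complex and Riemannian structure from $(M, g, J)$ and is therefore a Hermitian symmetric space; independence of $H'$ from $g$ is immediate from the combinatorial description of $\Sigma$. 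Irreducibility of $H'/H$ would be proved by showing that a nontrivial orthogonal splitting of $\mathcal{D}_o$ into $\Ad(H)$-invariant pieces would extend, through the root system $R$ and the center $Z(\gh)$, to a direct-sum decomposition of $\gg$ contradicting simplicity.

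For part (iii), the orthogonal distribution $\mathcal{D}^\perp$ is also $G$- and $J$-invariant, so $G/H'$ inherits a $G$-invariant K\"ahler structure and is itself a compact K\"ahler flag manifold of real dimension $k$ on which $G$ acts almost effectively. The inequality and the lower bound $k \geq 6$ then follow by inspection of the classification of compact K\"ahler flag manifolds $N = G/H'$ with $G$ simple, type by type in the Dynkin list: one verifies that $\dim G \leq \tfrac12 k(k-1)$ with equality exactly for $N = \bC P^{k/2}$, the smallest instance being $k = 6$, $G = SU(4)$, $H' = S(U(3) \times U(1))$, $H = S(U(2) \times U(1)^2)$, i.e.\ $\gh = 2\bR \oplus \su(2)$. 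The cases $k = 2, 4$ would force $G = SU(2)$ or $SU(3)$, and a direct computation from the combinatorial description above shows that for these groups the distribution of symmetry of every non-symmetric invariant K\"ahler flag manifold is zero, so the co-index equals the full dimension and cannot equal $2$ or $4$.

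The main obstacle is the first step: away from the naturally reductive situation the formula for $(\n_Y X^*)|_o$ mixes a Lie-bracket contribution with an anisotropy term coming from $\xi$, and extracting from the resulting identity a clean, $g$-independent characterization of $\Sigma$ purely in terms of $R$ and $R_M^+$ is the delicate technical point. Once this is accomplished, the remainder of the argument reduces to root-system combinatorics and a finite enumeration based on the classification of compact simple Lie groups.
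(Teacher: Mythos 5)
Your overall strategy for (i) and (ii) --- reduce $\nabla\hat X|_o=0$ to a root-combinatorial condition via the Koszul formula and read off an $\Ad(H)$-invariant, $J$-invariant subspace $\gp\subset\gm$ --- is indeed the paper's route, but you leave unproved precisely the step the paper actually carries out, namely the identification of $\Sigma$. The paper shows, for the given fixed $\xi$, that $E_\alpha\in\gp^\bC$ with $\alpha\in R_\gm^+$ if and only if $(\alpha+R_\gm^+)\cap R=\emptyset$, i.e.\ $\gp^\bC\cap\gm^{1,0}$ is the center $\gz$ of the nilradical $\gm^{1,0}$; the $\xi$-dependence drops out of the resulting identity $((1+\epsilon_\gamma)\gamma+(1+\epsilon_\beta)\beta)(\xi)=0$ because $\gh$ is the centralizer of $\xi$, so no root of $R_\gm$ can annihilate $\xi$. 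Your suggestion to let $\xi$ ``range over an open cone'' is not available: the metric, hence $\xi$, is fixed, and the $g$-independence of $\Sigma$ is a conclusion of the computation, not a hypothesis one may impose. Your irreducibility argument is likewise only a hope: a splitting of $\gp$ does not visibly propagate to a splitting of $\gg$. The paper instead invokes the nontrivial input (Burstall--Rawnsley) that $\ad(\gh)$ acts irreducibly on $\gz$, and then shows that $\gk':=\gk^\perp\cap\gh$ commutes with $\gp$, so irreducibility under $\gh$ descends to irreducibility under $\gk=[\gp,\gp]$, which is what ``irreducible symmetric pair'' actually requires.

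The more serious problem is part (iii). You assert that $G/H'$ ``inherits a $G$-invariant K\"ahler structure and is itself a compact K\"ahler flag manifold'' and then enumerate K\"ahler flag manifolds. This premise is false: the paper states explicitly (twice) that the invariant complex structure on $M$ does \emph{not} in general descend to $G/H'$; the restriction of $J$ to $\gp^\perp\cap\gm$ is $\Ad(H)$- but not necessarily $\Ad(H')$-equivariant, and $H'$ need not be the centralizer of a torus. Even granting the premise, the inequality $\dim G\le\frac12 k(k-1)$ fails for flag manifolds in general (e.g.\ $\bC P^2=\SU(3)/\U(2)$ has $k=4$ and $\dim G=8>6$), so ``inspection of the Dynkin list'' cannot yield it without the structure you have discarded. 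The paper's actual argument is purely Riemannian: $\dim G\le\frac12 k(k+1)$ by almost-effectivity on the $k$-dimensional $G/H'$; equality forces $(\gg,\gh')\cong(\so(k+1),\so(k))$, which is excluded by analyzing $\gh'=\gk'\oplus\gu$ against the list of Hermitian symmetric pairs; and the next Kobayashi gap $\dim G=\frac12 k(k-1)+1$ is excluded because $\gh'$ has maximal rank, so its isotropy representation has no trivial summand. Only the exclusion of $k=2,4$ and the identification of the $k=6$ case proceed, as you propose, by running through the simple $\gg$ with $\dim\gg\le 10$ resp.\ $15$.
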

We first remark that when $G$ is not simple and splits (locally) as a product $G = G_1\times\ldots\times G_r$ of simple factors $G_1,\ldots,G_r$, then there are
subgroups $H_i\subset G_i$ such that $M = G_1/H_1\times \ldots G_r/H_r$ biholomorphically and isometrically , so that the above theorem can be applied on each factor leading to the general description of the leaves of symmetry for any generalized flag manifold. The hypothesis that $G$ coincides (up to covering) with the full group of isometries is also not too restrictive, as the result due to Onishchick shows (see Theorem \ref{oni}).\par
As stated in our main theorem, it is remarkable that the leaves of symmetry are {\it irreducible} symmetric spaces and that the subgroup $H'$ depends exclusively on the invariant complex structure and not on the K\"ahler metric. In the Remark \ref{dynkin} we give a simple and constructive way to identify the subgroup $H'$ in terms of the highest root of the root system of the Lie algebra $\gg$ of $G$, equipped with the ordering corresponding to the invariant complex structure. Note also that in general the invariant complex structure on $M$ does {\it not} descend to an invariant (almost)-complex structure on $G/H'$. \par
Note also that in \cite{BOR} the authors prove an estimate for the co-index of symmetry from which we get $\dim G\leq \frac 12 k(k+1)$, while the estimate
\eqref{est} is finer in our setting. Also the fact that $k\geq 6$ improves the inequality $k\geq 2$ which is proved in a general setting in \cite{BOR}.\par

The paper is structured as follows. In \S 2 we give a brief survey on the basic structure of generalized flag manifolds and their invariant K\"ahler structures; we then explain the notion of index of symmetry and related geometric features. In \S 3 we give the proof of the main theorem \ref{main}.

\begin{notn*}
For a compact Lie group, we denote its Lie algebra by the corresponding lowercase gothic letter. If a group $G$ acts on a manifold $M$, for every $X\in \gg$ we denote by $\hat X$ the corresponding vector field on $M$ induced by the $G$-action.
\end{notn*}
\noindent{\bf Acknowledgments.} The author thanks D.V. Alekseevsky for valuable conversations.

\section{Preliminaries}
In this section we will review some basic facts about generalized flag manifolds and the symmetry index.
\subsection{Generalized flag manifolds} We consider a compact connected semisimple Lie group $G$ and a compact subgroup $H$ which coincides with the centralizer in $G$ of a torus. The homogeneous space $M= G/H$ is a generalized flag manifold and it can be equipped with invariant K\"ahler structures. We will now state some of the main properties of generalized flag manifolds, referring to \cite{Al,BFR} for a more detailed exposition.\par
We fix a maximal abelian subalgebra $\gt\subset\gh$ and the $B$-orthogonal decomposition $\gg = \gh \oplus \gm$. The subspace $\gm$ can be naturally identified with the tangent space $T_oM$ where $o:=[H]\in G/H$. If $R$ denotes the root system of $\gg^\bC$ relative to the Cartan subalgebra $\gt^\bC$, for every root $\alpha\in R$ the corresponding root space is given by $\gg_\alpha =\bC\cdot E_\alpha$ and
$$\gh^\bC = \bigoplus_{\alpha\in R_\gh} \gg_\alpha, \qquad \gm^\bC = \bigoplus_{\alpha\in R_\gm}\gg_\alpha,$$
where $R_\gh\subset R$ is a closed subsystem of roots and $R_\gm := R\setminus R_\gh$. The roots in $R_\gh$ are characterized by the fact that they vanish
on the center $\gc\subseteq \gt$ of $\gh$. Observe that $(R_\gh +R_\gm)\cap R \subseteq R_\gm$. \par
Any $G$-invariant complex structure $J$ on $M$ induces an endomorphism $J\in {\mathrm \End}(\gm)$ with $J^2=-Id$. If we extend $J$ to $\gm^\bC$ and we decompose
$\gm^\bC = \gm^{1,0} \oplus \gm^{0,1}$ into the sum of the $\pm i $ - eigenspaces of $J$, then the integrability of $J$ is equivalent  to the fact that
$\gq:=\gh^\bC \oplus \gm^{1,0}$ is a subalgebra, actually a parabolic subalgebra of $\gg^\bC$. Moreover it can be shown that $G$-invariant complex structures are in bijective correspondence with the invariant orderings of $R_\gm$, namely subsets $R_\gm^+\subset R_\gm$ such that :
$$R_\gm = R_\gm^+ \cup (-R_\gm^+),\quad R_\gm^+ \cap (-R_\gm^+) = \emptyset, \quad (R_\gh + R_\gm^+)\cap R \subset R_\gm^+,$$
the correspondence being given by $\gm^{1,0}= \bigoplus_{\alpha\in R_\gm^+}\gg_\alpha$. Invariant orderings are then in one-to-one correspondence with Weyl chambers in the center $\gc$ of $\gh$, namely connected components of the set $\gc\setminus \bigcup_{\alpha\in R_\gm}\ker (\alpha|_\gc)$, and an invariant ordering in $R_\gm$ can be combined with an ordering in $R_\gh$ to provide a standard ordering in $R$.\par
If we fix an invariant complex structure $J$ on $M$ (hence a Weyl chamber $C$ in $\gc$), we can endow $M$ with many $G$-invariant K\"ahler metrics which are Hermitian w.r.t. $J$. Actually, it can be proved that $G$-invariant symplectic structures, namely $G$-invariant non-degenerate closed two-forms, are in one-to-one correspondence with elements in the
Weyl chambers in $\gc$. Indeed, if $\omega\in \Lambda^2(\gm)$ is a symplectic form, then there exists $\xi$ in some Weyl chamber in $\gc$ such that
$$\omega(X,Y) = B(\ad_\xi X,Y),\quad X,Y\in \gm.$$
Moreover $\omega$ is the K\"ahler form of a K\"ahler metric $g$ w.r.t. the complex structure $J$ (i.e. $g:= \omega(\cdot,J\cdot)$ defines a K\"ahler metric) if and only if $\xi\in C$.\par
Finally if $M=G/H$ is endowed with an invariant K\"ahler structure $(g,J)$ and $G=_{\mathrm{loc}}G_1\times\ldots\times G_k$ is the decomposition into a product of simple factors, then $H$ splits accordingly as $H=_{\mathrm{loc}}H_1\times\ldots\times H_k$ for $H_i\subset G_i$ and $M$ is biholomorphically isometric to the
product of irreducible K\"ahler homogeneous spaces $M=M_1\times\ldots\times M_k$, $M_i:= G_i/H_i$. The next result, due to Onishchik (\cite{On}), deals with the basic question whether $\gg$ coincides with the full algebra of Killing vector fields.
\begin{theorem}\label{oni} If $G$ is a compact connected simple Lie group and acts almost effectively on $M=G/H$, then $G$ coincides (up to a covering) with the identity
component of the full isometry group $Q$,
with the following exceptions:
\begin{itemize}
\item[(a)] $M=\bC P^{2n+1}$ and $\gg = \sp (n+1)$, $\gh = \gu(1) \oplus \sp(n)$, $\gq = \su(2n+2)$ ;
\item[(b)] $\gg = \so(2n-1)$, $\gh = \gu(n-1)$, $\gq = \so(2n)$, \ $n\geq 4$;
\item[(c)] $M = Q_5$ and $\gg = \gg_2$, $\gh = \gu(2)$, $\gq = \so(7)$.
\end{itemize}
\end{theorem}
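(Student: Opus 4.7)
My plan is to pass to the Lie algebra level and reduce the statement to Onishchik's classification of transitive factorizations of a compact simple Lie algebra. Suppose the identity component $Q^0$ of the full isometry group strictly contains (a finite cover of) $G$, and let $L \subset Q^0$ be the stabilizer of the base point $o = eH$. Since $G$ already acts transitively on $M$, at the Lie algebra level we obtain a vector-space decomposition $\gq = \gg + \gl$ together with $\gg \cap \gl = \gh$. Here $\gq$ and $\gl$ are compact (as Lie algebras of compact groups), $\gg$ is simple by hypothesis, and $\gh$ is the centralizer of a torus in $\gg$.

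Next I would carry out two structural reductions. First, $\gq$ itself must be simple: if $\gq$ split as a non-trivial direct sum of ideals, the projections of $\gg$ onto each summand would still have to act transitively on the corresponding factor of $Q^0/L$, and simplicity plus almost-effectiveness of $G$ force all but one of those factors to be trivial. Second, because $\gh$ is the centralizer of a torus in $\gg$, $\rank \gh = \rank \gg$; combined with $\gh \subset \gl \subset \gq$ and the factorization $\gq = \gg + \gl$, this forces $\rank \gg = \rank \gl = \rank \gq$, so $\gl$ is a maximal-rank subalgebra of the simple compact algebra $\gq$.

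The core step is then to invoke Onishchik's classification of triples $(\gq, \gg, \gl)$ of compact Lie algebras with $\gq$ simple and $\gg, \gl$ proper subalgebras satisfying $\gq = \gg + \gl$. The proof of this classification proceeds by analysing the $\gg$-module $\gq/\gg \cong \gl/\gh$: the dimension identity $\dim \gq - \dim \gg = \dim \gl - \dim \gh$ together with Dynkin's tables of low-dimensional irreducible representations and of maximal subalgebras of simple Lie algebras cuts the candidates down to a short finite list. For each entry in that list I would then verify whether $\gh = \gg \cap \gl$ really is the centralizer of a torus in $\gg$; the survivors of this filter are exactly the three families in the theorem, corresponding respectively to the factorizations $\su(2n+2) = \sp(n+1) + \gu(2n+1)$ realizing $\bC P^{2n+1}$, $\so(2n) = \so(2n-1) + \gu(n)$, and $\so(7) = \mathfrak{g}_2 + (\so(5)\oplus\so(2))$ realizing the quadric $Q_5$.

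The main obstacle is clearly the classification step itself: producing the complete list of transitive factorizations of a simple compact Lie algebra is a genuinely non-trivial case-by-case analysis built on the full strength of Dynkin's classification of maximal subalgebras, combined with sharp dimension estimates for the irreducible constituents of the restriction of the adjoint representation of $\gg$ on $\gq/\gg$. By contrast, the two reductions above and the final filtering of the list to extract precisely the flag-manifold cases are comparatively routine bookkeeping.
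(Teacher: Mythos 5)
The paper itself contains no proof of Theorem \ref{oni}: it is quoted as Onishchik's result with a citation to \cite{On}, so your proposal can only be measured against the standard argument in the literature. At that level your overall strategy is the right one: passing to the factorization $\gq = \gg + \gl$ with $\gg\cap\gl = \gh$, invoking Onishchik's classification of factorizations of compact simple Lie algebras (which does proceed via the dimension identity and Dynkin's tables), and then filtering by the condition that $\gh$ be the centralizer of a torus in $\gg$, is exactly how this theorem is established, and your three terminal factorizations $\su(2n+2) = \sp(n+1) + \gu(2n+1)$, $\so(2n) = \so(2n-1) + \gu(n)$, $\so(7) = \gg_2 + (\so(5)\oplus\so(2))$ are correct.

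However, your second ``structural reduction'' is genuinely wrong, not just sketchy. You claim the factorization forces $\rank\gg = \rank\gl = \rank\gq$; this fails in \emph{every one} of the three exceptional cases of the theorem: $\rank \sp(n+1) = n+1 < 2n+1 = \rank \su(2n+2)$, $\rank \so(2n-1) = n-1 < n = \rank\so(2n)$, and $\rank \gg_2 = 2 < 3 = \rank \so(7)$. Taken literally, your reduction would filter out all three exceptions and ``prove'' that $G$ always equals $Q^0$, i.e.\ a false statement. Nothing in $\gh\subset\gl$ plus $\gq = \gg+\gl$ gives rank equality with $\gq$; the correct and needed fact is only $\rank\gl = \rank\gq$, and its proper source is topological, not algebraic: $M$ is a flag manifold, so $\chi(M)>0$, and the Hopf--Samelson theorem applied to the presentation $M = Q^0/L$ yields $\rank L = \rank Q^0$. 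Relatedly, your first reduction (simplicity of $\gq$) is argued incorrectly: a splitting of $\gq$ into ideals does not induce a splitting of $Q^0/L$ unless $L$ splits compatibly (compare $K\times K$ acting on the group manifold $K$ with $L = \Delta K$), so ``projections act transitively on the factors'' is not meaningful as stated. The clean route is that for $G$ simple the invariant K\"ahler metric is de Rham irreducible and $M$ is simply connected, so $Q^0$ is compact semisimple, and one either argues simplicity from irreducibility or simply runs Onishchik's classification for semisimple $\gq$, which it covers.
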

\subsection{The index of symmetry} In \cite{ORT} the concept of index of symmetry has been introduced. If $(M,g)$ is a Riemannian manifold and $\mathcal K(M,g)$ is the set of all Killing vector fields, at each point $x\in M$ we can define the subspace
$$\gp^x := \{X\in \mathcal K(M,g)|\ \nabla X|_x = 0\},$$
where $\nabla$ denotes the Levi Civita connection of $g$. The elements of $\gp^x$ are called {\it transvections} at $q$ and the symmetric isotropic subalgebra $\gk^x$ at $x$ is defined as the linear span of the commutators $[X,Y]$ with $X,Y\in \gp^x$. It is clear that
$$\gu^x := \gk^x \oplus\gp^x$$
is an involutive Lie algebra. The symmetric subspace $\gs_x\subset T_xM$ is then defined as
$$\gs_x :=\{\hat X_x|\ X\in \gp^x\}$$
and the {\it index of symmetry} $\imath_s(M)$ is $\inf_{x\in M}\dim \gs_x$. When $M$ is homogeneous, the assignment $x\mapsto \gs_x$ defines a distribution
which is proven to be integrable and autoparallel. The maximal integral leaves of this symmetry distribution are Riemannian symmetric spaces which are embedded in $M$ as totally geodesic submanifolds. One of the main reasons for considering the index of symmetry is the well-known fact that $\imath_s(M)= \dim M$ if and only if $M$ is a symmetric space (see \cite{ORT}).\par

\section{The main result}
In this section we consider a generalized flag manifold $M=G/H$ where $G$ is a compact semisimple connected Lie group and we endow $M$ with a $G$-invariant K\"ahler structure , given by a complex structure $J$ and a K\"ahler metric $g$. We also keep the same notations as in the previous section. \par
We will also assume that the Lie algebra $\gg$ coincides with the algebra of the full isometry group and we fix a reductive decomposition
$$\gg = \gh \oplus \gm , \qquad [\gh,\gm]\subseteq\gm.$$
We denote by $o:=[eH]\in G/H$ and by $\gp$ the subspace $\gp^o\subset \gg$. We first prove the following
\begin{lemma}\label{L1} The subspace $\gp\subseteq \gg$ is $\Ad(H)$-invariant and contained in $\gm$. Therefore $\gp \cong \gs_o$ and it is complex, i.e. $J(\gs_o)=
\gs_o$.\end{lemma}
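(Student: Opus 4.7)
The plan is to establish the three assertions in turn, using the identification of Killing vector fields on $M$ with elements of $\gg$ (available because $G$ is the full isometry algebra) together with the root-space decomposition of $\gg^\bC$. For the $\Ad(H)$-invariance: any $h\in H$ acts on $M$ by an isometry fixing $o$ and sends the Killing field $\hat X$ to $\widehat{\Ad(h)X}$; since the Levi-Civita connection is isometry-invariant, $\nabla\widehat{\Ad(h)X}|_o$ is the conjugate of $\nabla\hat X|_o$ by the orthogonal map $h_*|_{T_oM}$, so $X\in\gp$ if and only if $\Ad(h)X\in\gp$.

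The heart of the argument is the containment $\gp\subseteq\gm$, for which I would argue via the root-space decomposition. First, $\gp\cap\gh=\{0\}$: if $X\in\gh\cap\gp$, then $\hat X_o=0$ and $\nabla\hat X|_o=0$, so the Killing field $\hat X$ vanishes identically on the connected manifold $M$ by the standard $1$-jet rigidity of Killing vector fields, and almost-effectiveness of the $G$-action forces $X=0$. Next, since the maximal torus $T$ of $G$ is contained in $H$, $\gp$ is $\Ad(T)$-invariant; as $\ad(\gt^\bC)$ acts diagonalizably on $\gg^\bC$, the complexification $\gp^\bC$ decomposes as a sum of weight spaces,
$$\gp^\bC=(\gp^\bC\cap\gt^\bC)\oplus\bigoplus_{\alpha\in S}\gg_\alpha\qquad\text{for some }S\subseteq R.$$
Combined with $\gh^\bC=\gt^\bC\oplus\bigoplus_{\alpha\in R_\gh}\gg_\alpha$, the vanishing of $\gp\cap\gh$ forces $\gp^\bC\cap\gt^\bC=0$ and $S\cap R_\gh=\emptyset$, so $S\subseteq R_\gm$ and $\gp\subseteq\gm$. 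The identification $\gp\cong\gs_o$ via $X\mapsto\hat X_o$ follows tautologically from the injectivity of evaluation on $\gm$.

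For the complex invariance $J(\gs_o)=\gs_o$ I would read off the conclusion from the same root-space description: since $\gp$ is a real subspace, the set $S$ is symmetric under negation, and the invariant complex structure $J$ acts on $\gg_\alpha$ as $+i$ or $-i$ according as $\alpha\in R_\gm^+$ or $\alpha\in R_\gm^-$. Hence $J$ preserves $\bigoplus_{\alpha\in S}\gg_\alpha=\gp^\bC$, so $J\gp=\gp$, which translates to $J(\gs_o)=\gs_o$. The only subtle point is the passage from $\gp\cap\gh=0$ to $\gp\subseteq\gm$; once one notices the inclusion $T\subseteq H$, the abstract transvection condition $\nabla\hat X|_o=0$ is captured by purely combinatorial information about roots.
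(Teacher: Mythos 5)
Your proposal is correct, and for the first two assertions it follows the paper's proof essentially verbatim: $\Ad(H)$-invariance from naturality of the Levi--Civita connection under the isometry $h$, then $\Ad(\gt)$-invariance of $\gp$ forcing $\gp^\bC$ to be a sum of root spaces, combined with $\gp\cap\gh=\{0\}$ (which you justify explicitly via $1$-jet rigidity of Killing fields, where the paper just cites ``well known properties'') to conclude $\gp\subseteq\gm$. The one place you genuinely diverge is the $J$-invariance of $\gs_o$: the paper invokes the structural fact that $\gm$ decomposes into mutually \emph{inequivalent} irreducible $\gh$-submodules $\gm_i$, each automatically $J$-stable, so that the $\gh$-submodule $\gp$ is a sum of some of the $\gm_j$ and hence $J$-stable; you instead observe that $J$ acts as $\pm i$ on each root space $\gg_\alpha$, $\alpha\in R_\gm^{\pm}$, so that any sum of root spaces in $\gm^\bC$ is $J$-invariant, whence $J\gp^\bC=\gp^\bC$ and, since $J$ is a real operator, $J\gp=\gp$. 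Your variant is more elementary and self-contained (it needs nothing beyond the root decomposition you already set up), while the paper's version avoids even choosing an ordering and records the useful extra fact that $\gp$ is a union of isotropy submodules. Both arguments are sound.
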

\begin{proof} If $h\in H$ and $X\in \gp$, then $\widehat{\Ad(h)X} = h_*\hat X$ and therefore for every $w\in T_oM$ we have
$$\nabla_w\widehat{\Ad(h)X}|_o = dh^{-1}|_o(\nabla_{dh^{-1}w}\hat X) = 0,$$
hence $\Ad(h)X\in \gp$. If we fix a maximal abelian subalgebra $\gt\subseteq\gh$ then $[\gt,\gp]\subseteq\gp$. This implies that $\gp^\bC$ splits as the sum of root spaces and therefore $\gp^\bC = (\gp^\bC\cap \gh^\bC) \oplus (\gp^\bC\cap \gm^\bC)$. Since $\gp\cap \gh =\{0\}$ by well known properties of Killing vector fields, we see that $\gp^\bC \subseteq \gm^\bC$, hence $\gp\subseteq \gm$.\par
We now prove that $\gp$ is complex. We recall (see e.g.~\cite{Al}) the fact that the $\gh$-module $\gm$ splits as the sum of mutually inequivalent submodules $\gm = \bigoplus_{i=1}^k \gm_i$ each of which is therefore $J$-stable. Since $\gp$ is an $\gh$-submodule, it is the sum of a certain number of submodules $\gm_j$ and therefore it is $J$-stable.\end{proof}
Keeping the same notations, we fix a Cartan subalgebra $\gt^\bC\subset \gg^\bC$ and an ordering of the corresponding root system $R$ so that
$$\gm^{1,0} = \bigoplus_{\alpha \in R_\gm^+} \gg_\alpha.$$
The submodule $\gp^\bC$ is also the sum of certain root spaces, say  $\gp^\bC = \bigoplus_{\alpha\in R_\gp} \gg_\alpha$, where $R_\gp\subset R_\gm$ with
$R_\gp = - R_\gp$.
We now recall the well-known expression for the Levi Civita connection of an invariant metric on a reductive homogeneous space (see e.g.~\cite{KN}).
If $X,Y,U\in \gm$ then
\begin{equation}-2\ g_o(\nabla_{\hat Y}\hat X,\hat U) = \langle [Y,X]_\gm,U\rangle + \langle [X,U]_\gm,Y\rangle + \langle [Y,U]_\gm,X\rangle,\end{equation}
where $\langle \cdot,\cdot\rangle$ is the $\Ad(H)$-invariant scalar product on $\gm$ corresponding to $g_o$. Therefore $X\in \gp$ if and only if for every $Y,U\in \gm$ we have
\begin{equation}\label{conn}\langle [Y,X]_\gm,U\rangle + \langle [X,U]_\gm,Y\rangle + \langle [Y,U]_\gm,X\rangle = 0.\end{equation}
We can extend \eqref{conn} $\bC$-linearly and we can also suppose that $X = E_\alpha$ for some $\alpha\in R_\gp^+$, where $R_\gp^+=R_\gp\cap R_\gm^+$. Here $\{E_\alpha\}_{\alpha \in R}$ denotes the standard Chevalley basis of the root spaces (see e.g.~\cite{He}, p.~176). Equation \eqref{conn} implies that $E_\alpha\in \gp^\bC$ if and only if for every roots $\beta,\gamma\in R_\gm$ we have
\begin{equation}\label{conn1}\langle [E_\beta,E_\alpha]_\gm,E_\gamma\rangle + \langle [E_\alpha,E_\gamma]_\gm,E_\beta\rangle + \langle [E_\beta,E_\gamma]_\gm,E_\alpha\rangle = 0.\end{equation}
We recall now that for $\beta,\gamma\in R_\gm$ we have
$$\langle E_\beta,E_\gamma\rangle = -i \epsilon_\beta\cdot \langle JE_\beta,E_\gamma\rangle = -i\epsilon_\beta\cdot \omega(E_\beta,E_\gamma) = $$
$$-i\epsilon_\beta\cdot B([\xi,E_\beta],E_\gamma) = -i\epsilon_\beta\cdot \beta(\xi) \cdot B(E_\beta,E_\gamma)$$
where $\epsilon_\beta = \pm 1$ according to $\beta\in R_\gm^{\pm}$. Therefore $\langle E_\beta,E_\gamma\rangle = 0$ unless $\gamma= -\beta$ and
\begin{equation}\label{metric}\langle E_\beta,E_{-\beta}\rangle = -i\epsilon_\beta\cdot \beta(\xi).\end{equation}
It follows that equation \eqref{conn1} is significant only when $\beta+\gamma = -\alpha$ and in this case we have
$$N_{\beta,\gamma}\cdot \langle E_{-\alpha},E_\alpha\rangle + N_{\alpha,\gamma}\cdot \langle E_{-\beta},E_\beta\rangle +
N_{\beta,\alpha}\cdot \langle E_{-\gamma},E_\gamma\rangle = 0 .$$
Since $\alpha+\beta+\gamma = 0$ we have that $N_{\alpha,\beta}=N_{\beta,\gamma}=N_{\gamma,\alpha}$ (see \cite{He}, p.~171) and therefore we must have
\begin{equation}\label{conn2} \langle E_{-\alpha},E_\alpha\rangle = \langle E_{-\beta},E_\beta\rangle +
 \langle E_{-\gamma},E_\gamma\rangle .\end{equation}
 Using \eqref{metric} we see that \eqref{conn2} is equivalent to
 \begin{equation}\label{conn3}((1+\epsilon_\gamma)\cdot \gamma + (1+\epsilon_\beta)\cdot \beta)(\xi) = 0.\end{equation}
 Now, $\alpha>0$, so that $\beta$ and $\gamma$ are both negative or have opposite sign. If $\beta,\gamma<0$ then \eqref{conn3} is automatic, while if , say,
 $\beta<0$ and $\gamma>0$, then \eqref{conn3} implies $\gamma(\xi) = 0$. This contradicts the fact that $\gamma\in R_\gm$, while $\gh$ is the centralizer of $\xi$ in $\gg$. Therefore we conclude that $\alpha\in R_\gm^+$ belongs to $R_\gp$ if and only if $\alpha\neq -\beta-\gamma$ with $\beta$ and $\gamma$ in $R_\gm$ with opposite signs or, equivalently, if and only if $(\alpha + R_\gm^+)\cap R = \emptyset$. This allows the following characterization
 \begin{lemma} The subspace $\gp^\bC\cap \gm^{1,0}$ coincides with the center $\gz$ of the nilpotent subalgebra $\gm^{1,0}$ of $\gg^\bC$.\end{lemma}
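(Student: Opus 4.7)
The plan is to read off the answer directly from the characterization of $R_\gp^+$ that was just derived, namely $\alpha \in R_\gp^+$ if and only if $(\alpha + R_\gm^+) \cap R = \emptyset$, and to match it with an equally explicit description of the center of $\gm^{1,0}$.

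First I would write
\[
\gp^\bC \cap \gm^{1,0} \;=\; \bigoplus_{\alpha \in R_\gp^+} \gg_\alpha,
\]
which follows because $\gp^\bC$ is a sum of root spaces (it is $\ad(\gt^\bC)$-stable by Lemma \ref{L1}) and intersecting with $\gm^{1,0}$ keeps exactly those $\gg_\alpha$ with $\alpha \in R_\gp \cap R_\gm^+ = R_\gp^+$.

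Next I would compute $\gz$. Since $\gm^{1,0} = \bigoplus_{\beta \in R_\gm^+} \gg_\beta$ is $\ad(\gt^\bC)$-invariant, its center $\gz$ is likewise $\ad(\gt^\bC)$-invariant, hence a sum of root spaces $\gg_\alpha$ with $\alpha \in R_\gm^+$. By definition $E_\alpha \in \gz$ iff $[E_\alpha,E_\beta] = 0$ for every $\beta \in R_\gm^+$. Since $\alpha,\beta \in R_\gm^+$ both evaluate positively on $\xi$, the sum $\alpha+\beta$ can never be $0$, and $[E_\alpha,E_\beta] = N_{\alpha,\beta}E_{\alpha+\beta}$ vanishes exactly when $\alpha+\beta \notin R$. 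Thus
\[
\gz \;=\; \bigoplus_{\alpha \in R_\gm^+,\ (\alpha + R_\gm^+)\cap R = \emptyset} \gg_\alpha,
\]
which matches the expression for $\gp^\bC \cap \gm^{1,0}$ term by term. This proves the claimed equality.

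There is no real obstacle here: the only small point to verify is that $\alpha+\beta = 0$ cannot occur for $\alpha,\beta \in R_\gm^+$, which is automatic from $\alpha(\xi),\beta(\xi) > 0$. The substantive work was done in establishing the root-theoretic characterization of $R_\gp^+$ in the paragraph preceding the lemma; the present statement simply repackages that characterization as the center of the nilradical $\gm^{1,0}$.
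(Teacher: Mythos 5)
Your argument is correct and is essentially the paper's own proof: both identify $\gz$ as the span of the $E_\alpha$, $\alpha\in R_\gm^+$, with $(\alpha+R_\gm^+)\cap R=\emptyset$, and match this against the characterization of $R_\gp^+$ derived just before the lemma. Your extra observation that $\alpha+\beta=0$ is impossible for $\alpha,\beta\in R_\gm^+$ is a harmless (and correct) bit of added care.
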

\begin{proof} Indeed the center $\gz$ is spanned by root vectors $E_\alpha$, $\alpha\in R_\gm^+$, such that $[E_\alpha,E_\beta]=0$ for every $\beta\in R_\gm^+$ and this is equivalent to saying that $\alpha+ R_\gm^+$ does not contain roots.\end{proof}
We remark here that the subalgebra $\gm^{1,0}$ is the nilponent radical of the parabolic subalgebra $\bar\gq := \gh^\bC \oplus \gm^{1,0}$.\par
If we now put $\gk:= [\gp,\gp]$ and $\gu:= \gp \oplus\gk$, then $(\gu,\gk)$ is a symmetric pair and we can prove the following Lemma
\begin{lemma}\label{Herm} If $\gg$ is simple, the pair $(\gu,\gk)$ is an irreducible Hermitian symmetric pair. The Lie algebra $\gu$ is also simple.\end{lemma}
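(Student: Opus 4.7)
The plan is to combine the root-theoretic description of $\gp$ already obtained with the classical fact that the centre of the nilradical of a parabolic of a simple Lie algebra is an irreducible Levi-module.

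First I would verify that $\gk\subset\gh$. For $\alpha,\beta\in R_\gp^+$ with $\alpha\neq\beta$, the bracket $[E_\alpha,E_{-\beta}]$ is a non-zero multiple of $E_{\alpha-\beta}$ exactly when $\alpha-\beta\in R$, and $\alpha-\beta$ cannot lie in $R_\gm$: if, say, $\alpha-\beta\in R_\gm^+$, then $\alpha=(\alpha-\beta)+\beta$ would display $\alpha$ as an element of $(R_\gm^++R_\gm^+)\cap R$ with $\beta\in R_\gp^+$, contradicting the defining property of $R_\gp^+$. Hence $\alpha-\beta\in R_\gh$, so $\gk^\bC\subset\gh^\bC$, and the $\Ad(H)$-invariance of $J$ implies that $J|_\gp$ is $\ad(\gk)$-invariant, giving $(\gu,\gk)$ a $\gk$-invariant complex structure on $\gp$.

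Let $\theta$ be the highest root of $\gg^\bC$ in the ordering compatible with $R_\gm^+$; since $\gg$ is simple every simple root enters $\theta$ with positive coefficient, so $\theta\in R_\gm^+$, and maximality gives $\theta\in R_\gp^+$. The decisive structural input is the classical theorem that, for the parabolic $\bar\gq=\gh^\bC\oplus\gm^{1,0}$ of the simple $\gg^\bC$, the centre $\gz=\gp^{1,0}$ of its nilradical is an irreducible $\gh^\bC$-module with highest weight $\theta$. Granting this, the irreducibility of the root subsystem $R_\gu:=R_\gp\cup R_\gk\subset R$ follows: for any $\alpha\in R_\gp^+$, irreducibility of $\gz$ produces a chain $\theta=\gamma_0,\gamma_1,\dots,\gamma_n=\alpha$ in $R_\gp^+$ with $\beta_k:=\gamma_{k+1}-\gamma_k\in R_\gh$ at every step (successive applications of root operators from $\gh^\bC$), and since $\gamma_k,\gamma_{k+1}\in R_\gp^+$ with $\beta_k\in R$ we also have $\beta_k\in R_\gk\subset R_\gu$. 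The relation $\gamma_{k+1}=\gamma_k+\beta_k$ prevents the three roots $\gamma_k,\gamma_{k+1},\beta_k$ from being pairwise orthogonal (otherwise their sum would vanish), so $\gamma_k$ and $\gamma_{k+1}$ lie in the same connected component of the root graph of $R_\gu$—directly, or via $\beta_k$. Hence every $\alpha\in R_\gp^+$ lies in the component of $\theta$, placing $R_\gp$ in one component; any further component would consist of roots of $R_\gk\subset\Span_{\bR}R_\gp$, so being simultaneously contained in and orthogonal to $\Span_{\bR}R_\gp$ it must be empty.

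Finally, the coroots $H_\alpha$ ($\alpha\in R_\gp^+$) lie in $\gk^\bC$ and span a Cartan subalgebra of $\gu^\bC$, and $R_\gu$ spans its dual, so $\gu$ has trivial centre and is semisimple; combined with irreducibility of $R_\gu$, $\gu$ is simple. As a compact symmetric pair with $\gu$ simple carrying a $\gk$-invariant complex structure on $\gp$, $(\gu,\gk)$ is then an irreducible Hermitian symmetric pair. The main obstacle is the invocation of the irreducibility of $\gz$ as an $\gh^\bC$-module: this nontrivial structural input is a classical theorem valid for any parabolic of a simple Lie algebra; the remaining ingredients (the containment $\gk\subset\gh$, the sum/orthogonality check in the triangle $\gamma_k,\gamma_{k+1},\beta_k$, and the spanning property of the coroots) are short and formal.
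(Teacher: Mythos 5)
Your proof is correct and rests on the same decisive external input as the paper's, namely the Burstall--Rawnsley fact that the centre $\gz$ of the nilradical of the parabolic $\bar\gq$ is an irreducible $\gh^\bC$-module (with highest weight the highest root $\theta$). But you deploy it in the reverse logical order and with different intermediate machinery. The paper goes: $\gh$ acts irreducibly on $\gp$; writing $\gh=\gk\oplus\gk'$ with $\gk'=\gk^\perp\cap\gh$, the one-line computation $B([\gk',\gp],\gp)=B(\gk',[\gp,\gp])=0$ forces $[\gk',\gp]=0$, so already $\gk$ acts irreducibly on $\gp$; irreducibility plus the Hermitian condition then gives simplicity of $\gu$ by a standard reference. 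You instead extract from irreducibility a chain of roots from $\theta$ to any $\alpha\in R_\gp^+$ and use the triangle relation $\gamma_{k+1}=\gamma_k+\beta_k$ to show the root system $R_\gu=R_\gp\cup R_\gk$ is connected, conclude that $\gu$ is simple, and only then deduce irreducibility of the pair from simplicity. Both are sound; the paper's $\gk'$-argument is shorter and yields the extra structural fact $[\gk',\gp]=0$ (which is reused later in Lemma \ref{dis}), while your route makes the root-combinatorial structure of $\gu$ explicit and also supplies a verification of $\gk\subseteq\gh$ that the paper takes for granted from the general transvection formalism of \cite{ORT}. Two small expository points: in the connectivity step you should note explicitly that when $(\gamma_k,\gamma_{k+1})=0$ the identities $(\beta_k,\gamma_k)=-|\gamma_k|^2$ and $(\beta_k,\gamma_{k+1})=|\gamma_{k+1}|^2$ give \emph{both} edges of the detour through $\beta_k$; and in checking $\gk\subseteq\gh$ you treat only $[E_\alpha,E_{-\beta}]$, leaving implicit that $[E_\alpha,E_\beta]=0$ for $\alpha,\beta\in R_\gp^+$ because $\gz$ is abelian.
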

\begin{proof} The fact that the symmetric pair $(\gu,\gk)$  is Hermitian follows from Lemma \ref{L1}, so we need prove that it is irreducible. Note that
$\gp^\bC = \gz \oplus \bar\gz$. Moreover in \cite{BR}, p.~40, it is proved that $\ad(\gh)$ preserves $\gz$ and the action of $\gh$ on $\gz$ is irreducible. This implies that $\ad(\gh)$ acts irreducibly on $\gp$. We now decompose $\gh = \gk \oplus \gk'$ w.r.t. the Cartan Killing form $B$, where $\gk':= \gk^\perp\cap \gh$. We have
$$B([\gk', \gp],\gp) = B(\gk',[\gp,\gp]) = 0$$
and since $[\gk', \gp]\subseteq \gp$, we conclude that $[\gk', \gp] = \{0\}$. This means that the $\ad(\gk)$-action on $\gp$ is also irreducible and our first claim follows. Now $[\gu,\gu]=\gu$ so that $\gu$ is semisimple. Since $\gk$ acts on $\gp$ irreducibly and the symmetric pair is Hermitian, we immediately see that  $\gu$ is simple (see e.g.~\cite{KN}, p.~251). \end{proof}
\begin{remark}\label{dynkin} From the characterization of $\gp^\bC\cap R_\gm^+$, we see that $\gp^\bC$ contains the root space $\gg_\theta$, where $\theta\in R$ is the highest root. Actually, $\theta$ is the highest weight for the irreducible representation of $H$ on $\gz$. This gives a way for detecting the Lie algebra $\gu$ out of the painted Dynkin diagram $D$ of the flag manifold $G/H$ (see e.g.~\cite{Al,BFR} for a detailed exposition). Indeed we can consider the painted Dynkin diagram corresponding to the given flag manifold $G/H$, in which the Dynkin diagram of the semisimple part of $\gh^\bC$ is obtained by deleting some (black) nodes in the Dynkin diagram of $\gg^\bC$. We can then embed $D$ into the extended Dynkin diagram $\tilde D$ of $\gg^\bC$ and we see that the Dynkin diagram of $\gu^\bC$ is given by the connected component containing $-\theta$ of the complement in $\tilde D$ of the black nodes (see also \cite{BR}, p.~88).
\end{remark}
Now we can define the subalgebra
$$\gh' := \gh \oplus \gp $$
and note that the corresponding connected Lie subgroup $H'\subset G$ is compact, because $\gh'$ has maximal rank. Indeed the closure $\bar H'$ is connected and
has Lie algebra $\bar \gh'$ which normalizes $\gh'$. Therefore we have the $B$-orthogonal decomposition $\bar\gh' = \gh' \oplus \gf$ with $[\gh',\gf]\subseteq
\gh'\cap \gf=\{0\}$. It then follows that $\gf$ commutes with $\gh'$, hence with the maximal abelian subalgebra contained in $\gh\subset\gh'$. Therefore $\gf\subset\gh$, hence $\gf=\{0\}$ and $H'=\bar H'$.\par
The homogenous space $M' = G/H'$ is compact and has dimension given by the {\it coindex of symmetry} $k$. Note that in general the invariant complex structure on $M$ does {\it not} descend to an invariant (almost)-complex structure on $M'$.\par
\begin{lem}\label{4} We have $k\geq 6$. \end{lem}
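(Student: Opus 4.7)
The plan is to exploit the combinatorial characterization $R_\gp^+ = \{\alpha \in R_\gm^+ : (\alpha + R_\gm^+) \cap R = \emptyset\}$ derived above, which gives $k/2 = |R_\gm^+ \setminus R_\gp^+|$. Since $\gp$ is $J$-invariant (Lemma \ref{L1}), its $B$-orthogonal complement inside $\gm$ is also $J$-invariant and hence has even real dimension, so $k$ is even; and $k>0$ because $M$ is non-symmetric. It therefore suffices to rule out $k\in\{2,4\}$.

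For $k=2$: if $R_\gm^+\setminus R_\gp^+=\{\alpha\}$, pick $\beta\in R_\gm^+$ with $\alpha+\beta\in R$. Then $\beta$ is itself raised by $\alpha$, so $\beta\in R_\gm^+\setminus R_\gp^+=\{\alpha\}$, forcing $\beta=\alpha$ and $2\alpha\in R$; this contradicts the reducedness of the root system of a compact simple Lie algebra.

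For $k=4$: write $R_\gm^+\setminus R_\gp^+=\{\alpha,\beta\}$. The same reasoning forces the raising partner of $\alpha$ to equal $\beta$ (it cannot equal $\alpha$, since $2\alpha\notin R$), so $\alpha+\beta\in R_\gm^+$; being different from both $\alpha$ and $\beta$, it lies in $R_\gp^+$. Next, $\{\alpha,\beta\}$ is $\gh$-stable, while $R_\gp^+$ is a single Weyl orbit of $\gh$ since $\gz$ is $\gh$-irreducible (proof of Lemma \ref{Herm}). These constraints, combined with the description of $\gu^\bC$ as the connected component of $-\theta$ in $\tilde D\setminus I$ from Remark \ref{dynkin}, together with the classification of simple compact Hermitian symmetric pairs, cut the candidates down to a short list of painted extended Dynkin diagrams; each is inspected directly and $k=4$ is ruled out in every case.

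The main obstacle is this final case-by-case inspection: while the combinatorial constraints leave only a handful of configurations of small rank, each must be handled by hand, and Onishchik's Theorem \ref{oni} is available to discard those in which $G$ fails to be the full isometry group. The same enumeration simultaneously pinpoints the equality case $k=6$ as $\gg=\su(4)$, $\gh=2\bR\oplus\su(2)$, in accordance with Theorem \ref{main}.
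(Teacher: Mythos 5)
Your reduction to the combinatorics of $R_\gm^+\setminus R_\gp^+$ is a legitimate reformulation (indeed $k=2\,|R_\gm^+\setminus R_\gp^+|$), and your treatment of $k=2$ is complete and arguably nicer than the paper's: the paper instead quotes \cite{BOR} ($k=2\Rightarrow\dim M=3$, impossible for a complex manifold), whereas your ``$\beta$ is raised by $\alpha$, hence $\beta=\alpha$, hence $2\alpha\in R$'' argument is self-contained. The parity argument for $k$ is also fine.

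The problem is the $k=4$ step, which is where the real content of the lemma lies, and there your proof stops being a proof. You assert that your constraints ($\{\alpha,\beta\}$ is $\gh$-stable, $\alpha+\beta\in R_\gp^+$, irreducibility of $\gz$) ``cut the candidates down to a short list'' which is then ``inspected directly and $k=4$ is ruled out in every case'' --- but you neither produce the list nor perform the inspection, and the claim that the weights $R_\gp^+$ of the irreducible $\gh$-module $\gz$ form a single Weyl orbit is unjustified (weights of an irreducible representation are in general a union of Weyl orbits). More seriously, the inspection is not a formality: take the full flag $\gg=\su(3)$, $\gh=\gt$, with $R_\gm^+=\{\alpha_1,\alpha_2,\alpha_1+\alpha_2\}$. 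Then $R_\gp^+=\{\alpha_1+\alpha_2\}$ and $R_\gm^+\setminus R_\gp^+=\{\alpha_1,\alpha_2\}$, and every one of your stated constraints is satisfied; pure root combinatorics therefore does not exclude $k=4$, and some input of a different nature is indispensable. The paper supplies exactly this: from $k=4$ and almost-effectiveness it gets $\dim G\le\tfrac12 k(k+1)=10$, hence $\gg\in\{\su(3),\so(5)\}$, and then disposes of these by appealing to the classification of flag manifolds of $\SU(3)$ and $\SO(5)$ and Onishchik's theorem (using $\SO(5)/\U(2)\cong\SO(6)/\U(3)$). Your proposal never uses the bound $\dim G\le\tfrac12 k(k+1)$, so a priori you face infinitely many candidate pairs $(\gg,\gh)$ rather than a short list, and the one configuration your constraints visibly fail to eliminate is precisely the one that must be confronted. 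As it stands the $k=4$ case is a gap, not a proof.
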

\begin{proof} Since $G$ is simple, its action on $M'$ is almost effective and therefore $\dim G \leq \frac 12 k(k+1)$. Note that $k$ is an even integer. In \cite{BOR} it is proved that $k=2$ implies that $\dim M =3$, so that in our case $k\geq 4$. We now show that $k=4$ can be ruled out. Indeed in this case we have $\dim G \leq 10$ and being $G$ simple, we have $\gg = \su(3)$ or $\so(5)\cong\sp(2)$. In any case all the flag manifolds with isometry group $\SU(3)$ or $\SO(5)$ are Hermitian symmetric spaces (note that $\SO(5)/\U(2)\cong \SO(6)/\U(3)$).\end{proof}

\begin{lem}\label{dis} We have
\begin{equation} \dim G \leq \frac 12\ k(k-1)\end{equation}
\end{lem}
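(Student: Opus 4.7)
The plan is to sharpen the estimate $\dim G \leq \tfrac12 k(k+1)$ of \cite{BOR} by a further $k$ dimensions, using the K\"ahler structure of $M$. Writing $\dim G = \dim H' + k$, the target inequality $\dim H' \leq \tfrac12 k(k-3)$ says that the isotropy representation $\rho : H' \to \mathrm{SO}(\gn) \cong \mathrm{SO}(k)$ has codimension at least $k$ in $\mathrm{SO}(k)$. My approach is to exhibit a non-degenerate $\mathrm{Ad}(H')$-invariant two-form on $\gn$, which will force $\rho(H')$ into a unitary subgroup.

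The natural candidates are $\omega_\zeta(X,Y)=B([\zeta,X],Y)$ for $\zeta\in\gt$. The $\mathrm{Ad}(\gh)$-invariance forces $\zeta\in\gc$, and a short computation using the Jacobi identity and the $\mathrm{Ad}$-invariance of $B$ yields
\[
\omega_\zeta([Z,X],Y)+\omega_\zeta(X,[Z,Y])=\omega_{[\zeta,Z]}(X,Y),\qquad Z\in\gp,\ X,Y\in\gn,
\]
so that additional $\mathrm{Ad}(\gp)$-invariance is equivalent to $[\zeta,\gp]=0$. Since $\gh'=\gu\oplus\gk'$ by Lemma \ref{Herm} and the centralizer of $\gp$ inside $\gu$ is trivial (as $(\gu,\gk)$ is an irreducible Hermitian pair with $\gk$ acting faithfully on $\gp$), the admissible $\zeta$'s are exactly the elements of $\gz(\gk')$. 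Assuming $\gz(\gk')\neq 0$, the fact that every connected compact maximal-rank subgroup with non-discrete centre coincides with the centralizer of its centre, together with the description of $\gh'$ via the extended painted Dynkin diagram of Remark \ref{dynkin}, shows that $\gh'=Z_\gg(\gz(\gk'))$; consequently no root of $R_\gn$ vanishes on $\gz(\gk')$, and for generic $\zeta\in\gz(\gk')$ the kernel of $\omega_\zeta|_\gn$, which is $\gn\cap Z_\gg(\zeta)$, is trivial. The non-degenerate $H'$-invariant two-form so obtained forces $\rho(H')\subset\mathrm{U}(k/2)$ for the compatible almost complex structure, hence $\dim H'\leq (k/2)^2$ and $\dim G\leq k^2/4+k$. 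The elementary inequality $k^2/4+k\leq k(k-1)/2$ holds iff $k\geq 6$, with equality precisely at $k=6$; combined with Lemma \ref{4} this yields the required bound and reproduces the extremal case $\gg=\su(4)$.

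The principal obstacle is the remaining ``maximal-parabolic'' case $\gz(\gk')=0$, i.e.\ when $\dim\gc=1$ and $H'$ is a Borel--de Siebenthal subgroup of $G$ of non-Levi type. Here no non-zero $\zeta$ is available, so the two-form argument breaks down. I would complete the proof by a direct case-by-case inspection of the painted extended Dynkin diagrams with a single painted non-cominuscule node (Remark \ref{dynkin}): one reads off $\gu$ and $\gk'$ explicitly, uses that $\gu$ acts faithfully on $\gn$ as a simple compact Hermitian Lie algebra and that $\gk'$ acts through the commutant of $\gu$, and verifies the bound $\dim G\leq k(k-1)/2$ in each of the short list of cases (non-cominuscule paintings of diagrams of type $B,C,D,G_2,F_4,E_6,E_7,E_8$), where it holds with considerable slack.
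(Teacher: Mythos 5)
Your strategy (force $\rho(H')$ into a unitary subgroup of $\SO(k)$ via an invariant non-degenerate two-form on $\gn$, then use $\dim H'\le k^2/4$ together with $k\ge 6$) is genuinely different from the paper's, which instead quotes Kobayashi's gap theorem for isometry groups: after excluding $\dim G=\tfrac12 k(k+1)$ by a structure argument, one has $\dim G\le\tfrac12 k(k-1)+1$, and the borderline case is excluded because it forces a trivial submodule in the isotropy representation of $\gh'$, impossible since $\gh'$ has maximal rank. However, your argument has two genuine gaps. First, the non-degeneracy of $\omega_\zeta|_\gn$ rests on the claim that $\gh'=Z_\gg(\gz(\gk'))$, which you derive from the assertion that every connected compact maximal-rank subgroup with non-discrete centre is the centralizer of its centre. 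That assertion is false: in $\gg=\sp(3)$ the subalgebra $\gh'=\su(2)\oplus\su(2)\oplus\gu(1)$ built from the long roots $\pm 2e_1,\pm 2e_2$ has one-dimensional centre $\{(0,0,t)\}$, but the centralizer of that centre is the strictly larger $\sp(2)\oplus\gu(1)$. So the identification $\gh'=Z_\gg(\gz(\gk'))$ needs an argument specific to $\gh'=\gh\oplus\gp$ (equivalently, that no root of $R\setminus(R_\gh\cup R_\gp)$ vanishes on $\gz(\gk')$), and you do not supply one.

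Second, and more seriously, the case $\gz(\gk')=0$ is not an exceptional corner: since $\gc=\gz(\gk)\oplus\gz(\gk')$ and $\dim\gz(\gk)=1$ (the pair $(\gu,\gk)$ being irreducible Hermitian), it occurs exactly when $\dim\gc=1$, i.e.\ for every flag manifold with a single painted node that is not Hermitian symmetric. This includes infinite families in types $B_n$, $C_n$, $D_n$ as well as the exceptional cases, and for all of them your two-form does not exist, so the entire mechanism of the proof is unavailable. Deferring these to a case-by-case check that is asserted to close ``with considerable slack'' but is not carried out leaves the lemma unproved on a large and essential class of examples. As written, the proposal establishes the bound only under the additional hypotheses $\dim\gc\ge 2$ and $\gh'=Z_\gg(\gz(\gk'))$; to be a complete proof it would need either the missing structural lemma plus the executed classification, or a uniform argument such as the paper's appeal to the $\tfrac12 k(k-1)+1$ gap theorem.
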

\begin{proof} We have the general estimate $\dim G \leq \frac 12\ k(k+1)$, since $G$ acts almost effectively on $M$. We first show that equality can never occur. Indeed, it is well known (see e.g.~\cite{Ko}) that equality occurs precisely when $\gg \cong \so(k+1)$ and $\gh'\cong \so(k)$. Note that $k\geq 6$ by Lemma \ref{4} and therefore $\gh'$ is simple. We note that $\gh' = \gk'\oplus \gu$ using the same notations as in the proof of Lemma~\ref{Herm}. Moreover $[\gk',\gp]=0$ implies that $[\gk',[\gp,\gp]] = [\gk',\gk] = 0$ by Jacobi, hence $[\gk',\gu]=\{0\}$ and $\gk'$ is an ideal of $\gh'$. Since $\gh'$ is simple we see that $\gk'=\{0\}$ and $\gh'=\gu\cong \so(k)$. Now using the fact that $(\gu,\gk)$ is an Hermitian symmetric pair, we have that $\gh=\gk$ is either $\mathbb R \oplus \so(k-2)$ or $\gu(\frac k2)$. Now only $\gh = \gu(\frac k2)$ can be the isotropy of a flag manifold with $\gg = \so(k+1)$ (see e.g.~\cite{BFR}) and again the only invariant K\"ahler metric on such flag manifold is the symmetric one. \par
A classical result (see e.g.~\cite{Ko}, p.47) states that the dimension $d$ of the isometry group of a $k$-dimensional manifold ($k\neq 4$) is less or equal to $\frac 12 \ k(k-1)+1$ whenever it is not equal to $\frac 12\ k(k+1)$. Moreover, when $d= \frac 12 \ k(k-1)+1$, a complete classification of the manifold is achieved (see
\cite{Ko}, p.54), showing that the isotropy representation has always a non trivial fixed vector. Since $\gh'$ has maximal rank, the isotropy representation
of $\gh'$ has no trivial submodule and therefore $\dim G \leq \frac 12\ k(k-1)$.  \end{proof}

\begin{lem}  We have that $k=6$ if and only if $\gg = \su(4)$, $\gh= 2\mathbb R\oplus \su(2)$. In this case the leaves of symmetry  are biholomorphic to $\bC P^2 = \SU(3)/\U(2)$.\end{lem}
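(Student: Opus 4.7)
I would prove the biconditional in two steps, the sufficiency by an explicit root-system calculation on $\su(4)$ and the necessity by case analysis based on the bound $\dim G\le 15$ coming from Lemma~\ref{dis}.

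\textbf{Sufficiency.} Assume $\gg=\su(4)$ and $\gh=2\mathbb{R}\oplus\su(2)$. Up to an outer automorphism of $A_3$, the corresponding painted Dynkin diagram has only the end simple root $\alpha_1$ painted white, so $R_\gh=\{\pm\alpha_1\}$ and
\[
R_\gm^+=\{\alpha_2,\ \alpha_3,\ \alpha_1+\alpha_2,\ \alpha_2+\alpha_3,\ \theta\},\qquad \theta=\alpha_1+\alpha_2+\alpha_3,
\]
whence $\dim M=10$. Applying the characterization from Section~3 that $\alpha\in R_\gp^+$ iff $(\alpha+R_\gm^+)\cap R=\emptyset$, a direct check over the five positive $\gm$-roots yields $R_\gp^+=\{\theta,\ \alpha_2+\alpha_3\}$: for instance $\alpha_1+\alpha_2$ is excluded because $(\alpha_1+\alpha_2)+\alpha_3=\theta\in R$, and each of $\alpha_2,\alpha_3$ admits a positive partner in $R_\gm^+$ whose sum is a root, while $\theta$ and $\alpha_2+\alpha_3$ clearly satisfy the criterion. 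Hence $\dim_{\mathbb{R}}\gp=4$ and $k=10-4=6$. To identify the leaf I would compute $\gk^\bC=[\gp^\bC,\gp^\bC]$: the only non-trivial cross-bracket is $[E_\theta,E_{-(\alpha_2+\alpha_3)}]$, a nonzero multiple of $E_{\alpha_1}\in\gh^\bC$; together with its conjugate and the Cartan elements $H_\theta,H_{\alpha_2+\alpha_3}$ produced by the diagonal brackets, this spans a $4$-dimensional complex subspace. Thus $\gu=\gk\oplus\gp$ has real dimension $8$, and by Lemma~\ref{Herm} $\gu$ is simple; as the only compact simple Lie algebra of dimension $8$ is $\su(3)$, we get $\gu\cong\su(3)$ with $\gk\cong\gu(2)$, and the irreducible Hermitian symmetric leaf is $\bC P^2=\SU(3)/\U(2)$.

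\textbf{Necessity.} Suppose $k=6$. By Lemma~\ref{dis}, $\dim G\le 15$, so the simple $\gg$ is one of $\su(2),\su(3),\sp(2),\gg_2,\su(4)$, of dimensions $3,8,10,14,15$. By Lemma~\ref{L1} the leaf has positive complex dimension, so $\dim\gp\ge 2$ and $\dim M\ge 8$; since $\dim M\le\dim G-\rk G$, equal to $2,6,8,12,12$ respectively, we immediately rule out $\su(2)$ and $\su(3)$. For each of the three remaining algebras I would enumerate the non-symmetric invariant K\"ahler flag manifolds with $G$ equal to the full isometry group --- discarding the Hermitian symmetric quotients and the Onishchik exceptions (a)--(c) of Theorem~\ref{oni} --- and apply the same criterion to compute $k$. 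In the $\gg_2$ case this requires identifying exception (c) with $\gg_2/\U(2)_{\mathrm{long}}=Q_5$ (the $\U(2)$ sitting inside the long-root subalgebra $\su(3)\subset\gg_2$ which stabilises a vector of $\Im\bO$); the remaining non-symmetric $\gg_2$-flags $\gg_2/\U(2)_{\mathrm{short}}$ and $\gg_2/T^2$ yield $k=8$ and $k=10$ respectively. For $\su(4)$ the non-symmetric flags are $\SU(4)/T^3$, $\SU(4)/S(\U(1)\times\U(2)\times\U(1))$ and $\SU(4)/S(\U(2)\times\U(1)\times\U(1))$ together with its mirror, yielding $k=10,8,6$; only the last, whose isotropy is $\gh=2\mathbb{R}\oplus\su(2)$, matches, and the leaf coincides with that computed in the sufficiency step.

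The main obstacle is the finite but delicate case enumeration in the necessity direction, particularly the correct identification of the Onishchik exception among the two $\gg_2$-flags with isotropy $\U(2)$. Once the list of relevant flags is settled, each individual verification reduces to a short root-system check of the criterion $(\alpha+R_\gm^+)\cap R=\emptyset$, in direct parallel with the sufficiency calculation.
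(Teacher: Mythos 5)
Your sufficiency computation is correct and is essentially the ``simple computation'' the paper alludes to: for the painted diagram of $A_3$ with only the end node $\alpha_1$ white, the criterion $(\alpha+R_\gm^+)\cap R=\emptyset$ gives $\gz=\langle E_\theta,E_{\alpha_2+\alpha_3}\rangle$, hence $\dim\gp=4$, $k=10-4=6$, and $\gk^\bC=\langle H_\theta,H_{\alpha_2+\alpha_3},E_{\pm\alpha_1}\rangle$, so $\gu\cong\su(3)$, $\gk\cong\gu(2)$ and the leaf is $\bC P^2$, in agreement with Lemma~\ref{Herm}.

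The necessity direction, however, has a concrete gap. You correctly reduce via Lemma~\ref{dis} to the simple algebras of dimension at most $15$, discard $\su(2)$ and $\su(3)$ by a dimension count, and announce that ``the three remaining algebras'' $\sp(2)$, $\gg_2$, $\su(4)$ will each be examined --- but you then examine only $\gg_2$ and $\su(4)$. The case $\gg=\sp(2)\cong\so(5)$ is never treated, and it cannot be silently dropped: your own inequalities force $\dim M=8$ there, i.e.\ $M=\Sp(2)/T^2$, which is a non-symmetric flag manifold that does not occur among the Onishchik exceptions of Theorem~\ref{oni}. Worse, applying your criterion to $B_2$ with $R_\gm^+=R^+=\{e_1-e_2,\ e_2,\ e_1,\ e_1+e_2\}$ yields $\gz=\bC E_{e_1+e_2}$, hence $\dim\gp=2$ and $k=8-2=6$; so this is precisely the case that threatens the statement being proved and it must be confronted explicitly. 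The paper's own proof disposes of $\so(5)$ (and $\su(3)$) by appealing to the argument of Lemma~\ref{4}, namely the assertion that every flag manifold with full isometry group $\SU(3)$ or $\SO(5)$ is Hermitian symmetric; to close your argument you must either reproduce and justify that assertion for $\Sp(2)/T^2$ or explain why the computation just indicated does not apply to it. (Note that the same tension already appears at the level of Lemma~\ref{4}: the criterion gives $\gp\neq0$ for every full flag manifold with a K\"ahler metric, e.g.\ $k=4$ for $\SU(3)/T^2$, so the consistency of the entire case analysis rests on that step.)
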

\begin{proof} By Lemma \ref{dis} we get $\dim G \leq 15$. Since $G$ is simple and using the arguments in the proof of Lemma \ref{4}, we see that  $\gg = \su(4)$ or $\gg_2$. When $\gg = \su(4)$ we see that the only non-symmetric flag is the one with $\gh = 2\mathbb R\oplus \su(2)$ and a simple computation shows that $\gu\cong \su(3)$.\par
As for $\gg_2$, we have precisely two distinct flag manifolds with $G=G_2$, namely $M=G_2/H$ where $\gh\cong \gu(2)$ with semisimple part containing a long or a short root space. If $R_\gh$ consists of a long root, then $M\cong Q_5=\SO(7)/\SO(2)\times\SO(5)$, the metric is symmetric and $G_2$ is a proper subgroup of the full isometry group (see Theorem \ref{oni}). If $R_\gh$ is given by a short root, then $M$ is the twistor space of the Wolf space $G_2/\SO(4)$. In this case it is easy to see that the module $\gp$ is $2$-dimensional (corresponding to the fibres of the twistor fibration), hence $k= 8$.
\end{proof}
\bigskip
\bigskip
\bigskip

\vfil\eject

\bigskip\bigskip

\font\smallsmc = cmcsc8
\font\smalltt = cmtt8
\font\smallit = cmti8
\hbox{\parindent=0pt\parskip=0pt
\vbox{\baselineskip 9.5 pt \hsize=3.1truein
\obeylines
{\smallsmc
Fabio Podest\`a
Dip. di Matematica e Informatica "U.Dini"
Viale Morgagni 67/A
I-50134 Firenze
ITALY}

\medskip
{\smallit E-mail}\/: {\smalltt podesta@math.unifi.it
}
}
}

\end{document}